\theoremstyle{plain}
\newtheorem*{theorem*}{Theorem}
\newtheorem*{lemma*} {Lemma}
\newtheorem*{corollary*} {Corollary}
\newtheorem*{proposition*}{Proposition}
\newtheorem*{conjecture*}{Conjecture}
\newtheorem{theorem}{Theorem}[section]
\newtheorem{lemma}[theorem]{Lemma}
\newtheorem*{theorem1*}{Theorem 1}
\newtheorem*{theorem2*}{Corollary 2}
\newtheorem*{theorem3*}{Corollary 3}
\newtheorem*{theorem4*}{Proposition 4}
\newtheorem{corollary}[theorem]{Corollary}
\newtheorem{proposition}[theorem]{Proposition}
\theoremstyle{remark}
\newtheorem*{remark}{Remark}
\newtheorem*{remarks}{Remarks}
\newtheorem*{definition}{Definition}
\newtheorem*{example*}{Example}
\theoremstyle{definition}
\def\op{\operatorname}
\def\G{\Gamma}
   \def\Z{\Bbb{Z}} \def\R{\Bbb{R}}  
 \def\a{\alpha} \def\b{\beta}  \def\bp{\begin{pmatrix}}
 \def\ep{\end{pmatrix}} \def\bn{\begin{enumerate}} 
   \def\en{\end{enumerate}}
\def\ba{\begin{array}} \def\ea{\end{array}}  
 \def\S{\Sigma}  \def\a{\alpha} \def\b{\beta}
\def\be{\begin{equation}} \def\ee{\end{equation}}
\def\op{\operatorname}
\def\mod1{\mbox{Mod}(F^1)}
\def\c{\chi}
\begin{document}
\title[BNS invariants and algebraic fibrations of group extensions]{BNS invariants and  algebraic fibrations of group extensions}

\author{Stefan Friedl}
\address{Fakult\"at f\"ur Mathematik, Universit\"at Regensburg, Germany}
\email{sfriedl@gmail.com}

\author{Stefano Vidussi}
\address{Department of Mathematics, University of California,
Riverside, CA 92521, USA} \email{svidussi@ucr.edu}

\begin{abstract} Let $G$ be a finitely generated group that can be written as an extension \[  1 \longrightarrow K \stackrel{i}{\longrightarrow}  G \stackrel{f}{\longrightarrow} \G \longrightarrow 1 \] where $K$ is a finitely generated group. By a study of the BNS invariants we prove that if $b_1(G) > b_1(\G) > 0$, then $G$ algebraically fibers, i.e. admits an epimorphism to $\Z$ with finitely generated kernel. An interesting case of this occurrence is when $G$ is the fundamental group of a surface bundle over a surface $F \hookrightarrow X \rightarrow B$ with \textit{Albanese dimension} $a(X) = 2$. As an application, we show that if $X$ has virtual Albanese dimension $va(X) = 2$ and base and fiber have genus greater that  $1$, $G$ is noncoherent. This answers for a broad class of bundles a question of J. Hillman (\cite[Question 11(4)]{Hil15}). Finally, we show that there exist surface bundles over a surface whose BNS invariants 
have f a structure that differs from that of Kodaira fibrations, determined by T. Delzant.
  
\end{abstract}

\maketitle

\section{Introduction and Main Results}

Throughout this introduction let $G$ be a finitely generated group. We say that $G$ \textit{algebraically fibers} if there exists an epimorphism $\phi\colon  G \to \Z$ with finitely generated kernel. The study of algebraic fibrations of groups is closely related to the study of the Bieri--Neumann--Strebel invariant of $G$. This is an open subset $\Sigma^{1}(G) \subset S(G)$, where \[ S(G) = (H^{1}(G;\R) \setminus \{0\})/\R_{+}\] is the sphere of \textit{characters} of $G$. Its complement $\S^{1}(G)^{c} = S(G) \setminus \S^1(G)$ is referred to as the set of \textit{exceptional} characters.  (We refer to \cite{BNS87,St13} for definitions and properties used here.) The relation between algebraic fibrations and the BNS invariant can be described as follows: If we denote by $[\phi] \in S(G)$ the character associated with $\phi\colon G\to \Z$, thought of as an element of $H^{1}(G;\Z)  \subset H^{1}(G;\R)$, then it follows from \cite[Corollary~4.2]{BNS87} that $\phi$ algebraically fibers if and only if both $[\phi], [-\phi]$ belong to $\Sigma^{1}(G)$.

Assume that $G$ is a group extension of the form
\begin{equation} \label{eq:ses}  1 \longrightarrow K \stackrel{i}{\longrightarrow}  G \stackrel{f}{\longrightarrow} \G \longrightarrow 1, \end{equation} where $K$ is finitely generated. The study of BNS invariants of group extensions is quite challenging (see e.g.\ \cite{KoWo15}). We collect some basic facts. To start, since $K$ is finitely generated it follows from \cite[Proposition~A.4.5]{St13} that  a character $\chi \in S(\G)$ belongs to $\S^{1}(\G)$ if and only if its pull back $f^{*}\chi \in \S^1(G)$. This entails that if $\G$ algebraically fibers, then so does $G$.  When the extension is trivial, i.e.\ when $G=K\times \Gamma$, much is known (see e.g.\ \cite{St13}) and the sets of exceptional characters are determined by the equality
\begin{equation} \label{eq:sigmaprod} \S^1(K \times \G)^{c} = \S^{1}(K)^{c} \cup \S^{1}(\G)^{c} \end{equation} hence, as long as both $b_1(\G),b_1(K) > 0$ the trivial extension will always have nonempty BNS invariant. 

This makes it reasonable to expect that any extension as in (\ref{eq:ses}) with $b_1(\G) > 0$ satisfies $\S^1(G) \neq \emptyset$, unless $f^{*}\colon H^{1}(\G;\R) \to H^1(G;\R)$ is an isomorphism.

Our main result is the proof that this is true, and more precisely, that  $G$ algebraically fibers.
 
\begin{theorem1*} \label{thm:main}
Let $G$ be a finitely generated group that can be written as group extension  \begin{equation} \label{eq:ses2}  1 \longrightarrow K \stackrel{i}{\longrightarrow}  G \stackrel{f}{\longrightarrow} \G \longrightarrow 1 \end{equation} where $K$ is a finitely generated group. Assume furthermore that $b_1(G) > b_1(\G) > 0$. Then $G$ algebraically fibers. 
\end{theorem1*} 

Our interest in the problem of algebraic fibrations of group extensions arose from a geometric perspective, namely the case where $G$ is the fundamental group of a surface bundle over a surface $F \hookrightarrow X \rightarrow B$ with base and fiber both of genus greater than $0$. (If the genus of the fiber is at least $2$, the condition on the Betti numbers can be phrased in terms of nonvanishing of the co-invariant homology of the fiber $H_1(F;\R)_{\G}$, see Section \ref{sec:proofs} for details).

There is one noteworthy class of surface bundles over a surface -- namely those who admit a K\"ahler structure, e.g.\ Kodaira fibrations -- where the BNS invariant is fully understood, thanks to the work of Delzant (\cite{De10}, see also \cite{FV19}). To dovetail that result with Theorem \ref{thm:main}, it is useful to introduce the following notation.

\begin{definition} Let $F \hookrightarrow X \stackrel{f}{\rightarrow} B$ be a surface bundle over a surface with base and fiber both of genus greater than $0$. Let $G := \pi_1(X)$ and, using the homotopy exact sequence of the fibration, write $G$ as the extension in (\ref{eq:ses}).
If $f^{*}\colon H^{1}(\G;\R) \to H^1(G;\R)$ is an isomorphism we say that such bundle has \textit{Albanese dimension one}, or $a(X) = 1$. Any other surface bundle will be unambiguously referred to as having \textit{Albanese dimension two}. If a surface bundle admits a finite cover that has Albanese dimension $2$, we say that $X$ has \textit{virtual} Albanese dimension $2$ and we write $va(X)=2$. \end{definition}

Our notation stems from the analogy with the class of (irregular) K\"ahler manifolds $X$  of Albanese dimension one. That condition, determined solely by the fundamental group of $X$, amounts to the existence of an irrational  \textit{Albanese pencil} (a holomorphic map $f\colon X \to B$ to a Riemann surface of positive genus  with connected fibers), obtained by restriction to the image of the Albanese map of $X$, which induces an isomorphism $f^{*}\colon H^{1}(\G;\R) \to H^1(G;\R)$ where $G = \pi_1(X)$ and $\G = \pi_{1}(B)$. Note that the definition above is consistent in the overlap of the two classes, i.e.\ surface bundles $X$ that admit a K\"ahler structure. 
In fact, our interest for the connection with the K\"ahler case arises from the fact that when $X$ is a K\"ahler surface, Delzant has shown in \cite{De10} that, as long as $a(X) = 2$, the first BNS invariant is nonempty. In particular,  $G$ algebraically fibers. 

With the notation in place, we have the following consequence of the main theorem, that we single out for its interest.

\begin{theorem2*} \label{thm:sbs} Let $F \hookrightarrow X \to B$ be a surface bundle over a surface with fiber and base both of genus greater than $0$. Assume that the Albanese dimension of $X$ is $a(X) = 2$. Then $G$ algebraically fibers. \end{theorem2*} 

Besides the interest \textit{per se} in deciding that such a group $G$ admits an algebraic fibration, we'll show that Theorem \ref{thm:sbs} entails that, as long as fiber and base have both genus greater than $1$, then $G$ is noncoherent, namely it contains a finitely generated subgroup that is not finitely presented. 

The question of coherence of the fundamental group of a surface bundle over a surface was raised by Hillman in \cite[Question 11(4)]{Hil15} (and perhaps earlier). There are two cases, as far as we know, where this group was known to be noncoherent. The first is the case of surface bundles with monodromy of types I and II in Johnson's trichotomy (\cite{Jo93}): their fundamental groups contain $F_2 \times F_2$ as subgroup. The second case  appears in \cite{FV19} where the authors show that a Kodaira fibration that has virtual Albanese dimension $2$ has fundamental group that is noncoherent. Theorem \ref{thm:sbs} allows us to proceed like in that paper to show the following:

\begin{theorem3*}  \label{cor} Let $F \hookrightarrow X \to B$ be a surface bundle over a surface with both base and fiber of genus greater than $1$. If its Albanese dimension $a(X) = 2$, then the first BNS invariant  $\Sigma^1(G)$ and the second BNSR invariant $\Sigma^2(G)$ of its fundamental group $G = \pi_1(X)$ satisfy the relation 
\[  \S^2(G) \subsetneq \S^1(G) \subsetneq S(G), \] and $G$ is noncoherent. If $va(X) = 2$, $G$ is noncoherent.
\end{theorem3*}

Here,  $\S^2(G)$ is the second \textit{Bieri--Neumann--Strebel--Renz} (BNSR) invariant of $G$, the first of a collection of refinements of the BNS invariant introduced in \cite{BR88}.

In fact, we will present two proofs of noncoherence on $G$, the second being based on an elegant construction appearing in \cite{KrWa19}, that inspired in various way the techniques employed in the present paper. 

We will finish this paper by discussing the challenge of completely determining the BNS invariant of a surface bundle over a surface. In particular, we will show the existence of a surface bundle over a surface, whose fundamental group $G$ has exceptional characters that do not arise from an epimorphism $h\colon G \to C$ to the  fundamental group of a hyperbolic (orbi)surface and finitely generated kernel. This contrasts with what happens for Kodaira fibrations (or K\"ahler manifolds). We have the following

\begin{theorem4*} There exists a surface bundle $F \hookrightarrow X \stackrel{f}{\rightarrow} B$ with base and fiber of genus greater than $1$ whose fundamental group $G$ admits an epimorphism $g\colon G \to F_2$ such that $H^1(G;\R) = f^{*}H^{1}(\G;\R) \oplus g^{*}H^{1}(F_2;\R)$ and so that the set of exceptional characters contains two disjoint spheres \[ 
 f^{*}(\S^{1}(\G)^{c}) \cup g^{*}(\S^{1}(F_2)^{c})\,\subset\, 
\Sigma^{1}(G)^{c}. \]
Moreover for any epimorphism $h\colon G \to C$ onto the fundamental group of a  hyperbolic orbisurface $C$ such that the kernel is finitely generated we have  $g^{*}(\S^{1}(F_2)^{c})\cap h^*(\S^1(C)^c)=\emptyset$.

 \end{theorem4*} 

(Note that $\S^{1}(\G)^{c} = S(\G)$ and  $\S^{1}(F_2)^{c} = S(F_2)$.)

\subsection*{Acknowledgement} The authors wish to thank the referee for carefully reading the manuscript.
SF was supported by the SFB  1085 ``higher invariants'' which is supported by the Deutsche Forschungsgemeinschaft DFG. SV was supported by the Simons Foundation Collaboration Grant For Mathematicians 524230.

\section{Proofs} \label{sec:proofs}

Before proceeding with the proofs of the results listed in the introduction, we want to discuss the meaning of the assumption $b_1(G) > b_1(\G)$ in Theorem \ref{thm:main}. Given a group extension $G$ as in (\ref{eq:ses}), the action by conjugation of $G$ on its normal subgroup $K$ induces a representation $\rho\colon G \to \operatorname{GL}(V)$  on the homology of the kernel $V = H_1(K)$, where the homology can be taken with $\Z$ or $\R$ coefficients. As the action of $K$ on its homology is trivial, this representation descends to $\G$. The Lyndon--Hochschild--Serre spectral sequence associated to  (\ref{eq:ses}) gives,  in low degree, the following exact sequence: \begin{equation} \label{eq:lhs} H_2(G) \stackrel{f}{\longrightarrow} H_2(\G) \longrightarrow H_1(K)_{\G} \stackrel{i}{\longrightarrow} H_1(G) \stackrel{f}{\longrightarrow} H_1(\G) \longrightarrow 0.  \end{equation} The image of the map $H_1(K;\R)_{\G} \to H_1(G;\R)$ measures the failure of $f \colon  H_1(G;\R) \to H_1(\G;\R)$ (or equivalently $f^{*}\colon H^{1}(\G;\R) \to H^1(G;\R)$) to be an isomorphism, i.e.\ it measures the mismatch between $b_1(G)$ and $b_1(\G)$. 
A similar sequence exists for the cohomology groups, with the role of coinvariant homology of $K$ played by the invariant cohomology group $H^{1}(K)^{\G}$. In the case where $G$ is a surface bundle with fiber of genus greater than $1$, the map $H_2(G;\R) \longrightarrow H_2(\G;\R)$ in the sequence (\ref{eq:lhs}) is surjective (see e.g.\ \cite{Mo87}), hence the condition $b_1(G) > b_1(\G)$ is equivalent to  $H_1(K;\R)_{\G} \neq \{0\}$.

In the proof of Theorem \ref{thm:main} we will use some general results on the behavior of BNS invariant for an amalgamated free product. The first is quite well--known, and appears as \cite[Lemma B1.14]{St13}. 

\begin{lemma} \label{lem:streb} Let $\Pi = \Pi_{1} *_{K} \Pi_2$ a free product with amalgamation of two finitely generated  groups along a finitely generated  subgroup $K$. Let $\c \in S(\Pi)$ be a character whose restrictions satisfy the conditions $\c_1 \in \S^1(\Pi_1)$, $\c_K \neq 0$ and $\c_2 \in \S^1(\Pi_2)$. Then $\chi \in \S^1(\Pi)$. 
\end{lemma}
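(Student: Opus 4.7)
The plan is to use the Cayley graph characterization of the BNS invariant: fixing a finite generating set $S$ of $\Pi$, the character $\chi$ belongs to $\Sigma^{1}(\Pi)$ if and only if the full subgraph of the Cayley graph $\Gamma(\Pi,S)$ induced on the closed half-space $\Pi_{\chi} := \chi^{-1}([0,\infty))$ is connected. I therefore need to produce, for every $g \in \Pi_{\chi}$, a path in $\Gamma(\Pi,S)$ from $1$ to $g$ lying entirely in $\Pi_{\chi}$.

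For the setup I would choose finite generating sets $S_{i}$ of $\Pi_{i}$ whose union $S = S_{1} \cup S_{2}$ generates $\Pi$, arranging that $S$ contains a fixed element $k_{0} \in K$ with $\chi(k_{0})>0$; such a $k_{0}$ exists because $\chi_{K}\neq 0$. The hypothesis $\chi_{i} \in \Sigma^{1}(\Pi_{i})$ gives, for every $p \in \Pi_{i}$ with $\chi(p)\geq 0$, a path in $(\Pi_{i})_{\chi}$ from $1$ to $p$ using only edges labelled by $S_{i}$. Given $g \in \Pi_{\chi}$, the main step is to write it in amalgamated normal form $g = g_{1}g_{2}\cdots g_{n}$ with alternating syllables $g_{j} \in \Pi_{\epsilon_{j}}$, and then to rewrite it as
\[
g \;=\; \bigl(g_{1}k_{0}^{a_{1}}\bigr)\bigl(k_{0}^{-a_{1}}g_{2}k_{0}^{a_{2}}\bigr)\cdots\bigl(k_{0}^{-a_{n-1}}g_{n}k_{0}^{a_{n}}\bigr)\cdot k_{0}^{-a_{n}},
\]
where the exponents $a_{j}$ are positive integers chosen so that each cumulative partial product $g_{1}\cdots g_{j}k_{0}^{a_{j}}$ has non-negative $\chi$-value, and each parenthesized factor $k_{0}^{-a_{j-1}}g_{j}k_{0}^{a_{j}} \in \Pi_{\epsilon_{j}}$ also has non-negative $\chi$-value. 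Choosing $a_{j} = Cj$ for a sufficiently large constant $C$ makes both families of inequalities hold because $\chi(k_{0})>0$. The $\Sigma^{1}(\Pi_{\epsilon_{j}})$ hypothesis then supplies, within each parenthesized factor, a path in $(\Pi_{\epsilon_{j}})_{\chi}$ from $1$ to that factor; left-translating by the previous partial product places this path in $\Pi_{\chi}$. Concatenating these syllable-paths and then stepping from $gk_{0}^{a_{n}}$ back to $g$ by successive right-multiplication by $k_{0}^{-1}$ (which remains in $\Pi_{\chi}$ because $\chi(g)\geq 0$) yields the required path.

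The main obstacle is the simultaneous bookkeeping of the exponents $a_{j}$, ensuring that both the cumulative partial products and the individual modified syllables sit in the non-negative half-space while still allowing a return to $g$ at the end. This is precisely where the hypothesis $\chi_{K}\neq 0$ is indispensable: the booster $k_{0}$ has positive $\chi$-value and lies in both $\Pi_{1}$ and $\Pi_{2}$, so that its powers can absorb any negativity inherited from the original syllables and splice together the two $\Sigma^{1}$ hypotheses across the amalgamation. Without an element of positive $\chi$-value shared by the two vertex groups, no such rewriting is available and the argument breaks down.
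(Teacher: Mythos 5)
Your argument is correct and is essentially the standard proof of this fact: the paper does not prove the lemma itself but cites it as Lemma B1.14 of Strebel's notes, whose argument (mirrored in the paper's proof of the companion Lemma~\ref{lem:nonfib}) proceeds exactly as you do --- decompose $g$ into alternating syllables, splice in powers of an element of $K$ with positive $\chi$-value so that all partial products and modified syllables lie in the nonnegative half-space, and concatenate left-translated paths supplied by the $\Sigma^1(\Pi_1)$ and $\Sigma^1(\Pi_2)$ hypotheses. The bookkeeping with $a_j=Cj$ works as claimed, so there is no gap.
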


The second result, instead, seems new, and it is possibly interesting \textit{per se}. The proof that we present is similar in flavor (and at times \textit{verbatim}), to the proof of Lemma \ref{lem:streb} in \cite{St13} (whose notation we follow), but requires some further work.

\begin{lemma} \label{lem:nonfib} Let $\Pi = \Pi_{1} *_{K} \Pi_2$ a free product with amalgamation of two finitely generated  groups along a finitely generated  subgroup $K$. Assume that $\Pi_{2}$ is an HNN extension  $\Pi_2 = K \rtimes \Z = \langle K,s|s k s^{-1} = f(k) \rangle$ for some automorphism $f\colon K \to K$. Let $\c \in S(\Pi)$ be a character whose restrictions satisfy the conditions $\c_{1} \in \S^{1}(\Pi_1)$, $\c_{K} \neq 0$ and $\c_{2}(s) = 0$. Then $\c \in \S^{1}(\Pi)$.
\end{lemma}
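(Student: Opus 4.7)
The plan is to verify that for a suitable finite generating set $X$ of $\Pi$, the full subgraph $\Gamma_{\c}$ of the Cayley graph on $\Pi_{\c}:=\c^{-1}([0,\infty))$ is connected. I take $X=X_{1}\cup\{s\}$, where $X_{1}$ is a finite generating set of $\Pi_{1}$ containing a finite generating set of $K$. The proof will follow the structure of Strebel's argument for Lemma \ref{lem:streb}, replacing the hypothesis $\c_{2}\in\S^{1}(\Pi_{2})$ used there by a direct argument that exploits the semidirect-product structure $\Pi_{2}=K\rtimes\langle s\rangle$ together with $\c(s)=0$.

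The key substitute is the following observation. Every element of $\Pi_{2}$ has the unique form $ks^{n}$ with $k\in K$ and $n\in\Z$, and since $\c(s)=0$ one has $\c(ks^{n})=\c_{K}(k)$. Hence whenever $ks^{n}\in\Pi_{\c}$, the path $ks^{n}, ks^{n-\sigma}, \ldots, ks, k$ (with $\sigma=\sign(n)$) lies entirely in $\Pi_{\c}$ and realizes a zero-$\c$-cost connection from any $\Pi_{2}$-syllable to its $K$-part. This ``free sliding by $s$'' plays the role that $\S^{1}(\Pi_{2})$ plays in the proof of Lemma \ref{lem:streb}, where positive $\Pi_{2}$-segments are contracted back to $K$.

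With this replacement in place, the main argument will run as in the proof of \cite[Lemma B1.14]{St13}. Fix $k_{0}\in K$ with $\c_{K}(k_{0})>0$, which exists because $\c_{K}\neq 0$. Given $w\in\Pi_{\c}$, write $w$ in normal form as an alternating product of syllables in $\Pi_{1}$ and in $\Pi_{2}\setminus K$, each of the latter expressed as $k_{j}s^{n_{j}}$. Insert balanced powers $k_{0}^{M_{j}}k_{0}^{-M_{j}}$ between syllables, with the exponents $M_{j}$ chosen large enough that every left-hand partial product has nonnegative $\c$-value; this is possible because $\c(w)\geq 0$ and $\c_{K}(k_{0})>0$. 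Then process syllables from right to left: contract each $\Pi_{2}$-syllable to $K$ via the observation above, and contract the resulting $\Pi_{1}$-syllables via $\c_{1}\in\S^{1}(\Pi_{1})$.

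The main obstacle I expect is the careful bookkeeping of the exponents $M_{j}$ so that, after substitution and splicing of the Cayley-graph paths for the individual syllables, every intermediate vertex of the concatenated path lies in $\Pi_{\c}$. This parallels Strebel's argument for Lemma \ref{lem:streb} but must be verified anew in this setting, where the $\Pi_{2}$-contractions use zero-cost $s$-moves in place of $\S^{1}(\Pi_{2})$-paths. Once the bookkeeping is complete, splicing yields a path from $w$ to $1$ within $\Gamma_{\c}$, establishing $\c\in\S^{1}(\Pi)$.
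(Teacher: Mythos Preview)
Your proposal is correct and follows essentially the same approach as the paper's proof. Both arguments hinge on the same two observations: that the semidirect-product form $\Pi_2=K\rtimes\langle s\rangle$ lets one write any $\Pi_2$-syllable as $ks^n$, and that $\c(s)=0$ makes the $s$-slide cost-free, so all the genuine connecting happens inside $\Pi_1$ via $\c_1\in\Sigma^1(\Pi_1)$, after using $\c_K\neq 0$ to push partial products into $\Pi_\c$. The only differences are organizational: the paper runs an induction on the number of syllable pairs and inserts ad hoc elements $h_0,h_1,h_2\in K$ at each step, whereas you process right-to-left after inserting all balanced powers $k_0^{M_j}k_0^{-M_j}$ up front; these are equivalent bookkeeping schemes for the same idea.
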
 

\begin{proof}
Let $\mathcal{X}_{1}$ be a finite generating set for $\Pi_1$.  Let $\mathcal{X}_{K}$ be a finite generating set for $K$ and let $\mathcal{X}_{2} = \mathcal{X}_{K} \cup \{s\}$. Denote by $\G(\Pi_1,\mathcal{X}_1),\G(\Pi_2,\mathcal{X}_2)$  the Cayley graphs for $\Pi_1,\Pi_2$ associated to their respective generating sets.
Then $\mathcal{X} = \mathcal{X}_{1} \cup \mathcal{X}_{2}$ constitutes a finite generating set for $\Pi$, with associated Cayley graph $\G(\Pi,\mathcal{X})$.

 Recall that, by the very definition of the BNS invariant, to prove that $\c \in \S^1(\Pi)$ we need to show that the \textit{ subgraph $\G(\Pi,\mathcal{X})_{\c}$} of $\G(\Pi,\mathcal{X})$ determined by the \textit{vertices $\Pi_{\c}$  with nonnegative $\c$--value} is connected. 

Let $g \in \Pi_{\c}$; there exists a finite collection of elements $g_{1,j} \in \Pi_{1}, g_{2,j} \in \Pi_{2}, j = 1,\dots,n$ such that
\begin{equation} \label{eq:deco} g = g_{1,1}\cdot g_{2,1} \cdot g_{1,2} \cdot g_{2,2}\cdot  \dots \cdot g_{1,n} \cdot g_{2,n}. \end{equation} We will show that there exists a path in $\G(\Pi,\mathcal{X})_{\c}$ from $1$ to $g$ by induction on $n$.

Let's consider the initial case $n = 1$, i.e.\ $g = g_1 \cdot g_2$.  As $\c_{K} \neq 0$, there exist two elements $h_{1},h_{2} \in K$ such that all three \[ g'_{1} = g_1 \cdot h_1, \,\  g'_{2} = h_{1}^{-1} \cdot g_2 \cdot h_2, \,\ h_{2}\] are contained in $\Pi_{\c}$. As $\G(\Pi_1,\mathcal{X}_1)_{\c_{1}}$ is connected by the assumption that $\c_1 \in \S^1(\Pi_1)$, there exists a path $p_1 = (1,w_1)$ from $1$ to $g'_{1}$ contained in $\G(\Pi_1,\mathcal{X}_1)_{\c_1}$, where $w_1$ is a word in $\mathcal{X}_{1}^{\pm}$. 

Next, consider the element $g'_{2} = h_{1}^{-1} \cdot g_2 \cdot h_2$. As $g_2 \in \Pi_{2}$, it can be written as a word in the generating set for $K$ and the stable letter $s$. Because of the relations in $\Pi_2$, we can use the equalities 
\[ s \cdot k = f(k) \cdot s \,\ \mbox{and} \,\ s^{-1} \cdot k = f^{-1}(k) \cdot s^{-1}  \] to push powers of the stable letter to the right and rewrite $g_{2} = w(k) \cdot s^{m}$, where $w(k)$ is a word in $\mathcal{X}_{K}^{\pm}$ and $m \in \Z$. As $h_2 \in K$, we can further write \[ g'_{2} = h_1^{-1} \cdot g_2 \cdot h_2 = h_1^{-1} \cdot w(k) \cdot s^{m} \cdot h_2 = h_1^{-1} \cdot w(k) \cdot h'_2 \cdot s^{m} \] where $h'_2 =  f^{m}(h_2)$, with $f^{m}$ an iteration of $f$ or its inverse. Consider the element 
\[ h_1^{-1} \cdot w(k) \cdot h'_2 = g'_2 \cdot s^{-m} \in K.\] 
 As $\c(s) = 0$, we have  
 \[ \c_{1}(h_1^{-1} \cdot w(k) \cdot h'_2) = \c(h_1^{-1} \cdot w(k) \cdot h'_2) = \c(g'_2) \geq 0. \]
  As $K \leq \Pi_1$, there exist a path $p_2 = (1,w_2)$ from $1$ to $h_1^{-1} \cdot w(K) \cdot h'_2$ contained in $\G(\Pi_1,\mathcal{X}_1)_{\c_1}$, which again is connected by assumption.

Next, consider the path $p_3 = (1,w_3)$ in $\G(\Pi,\mathcal{X})$ from $1$ to $s^{m}$ determined by the vertices 
\[ 1,\dots,  s^{m-\sigma(m) \cdot 2}, \,\ s^{m-\sigma(m) \cdot 1},s^{m}, \] where $\sigma(m)$ is the sign of $m$. (Here, $w_3 = s^m$.)
 As  $\chi(s) = 0$, all these vertices are contained in $\Pi_{\c}$; in particular, the path $p_3$ is entirely contained in $\G(\Pi,\mathcal{X})_{\c}$. 

Finally, as $\c_2$ is well--defined over $\Pi_2$, we have $\c(k) = \c_{2}(k) = \c_{2}(f(k)) = \c(f(k))$. As a consequence, $\c_{1}(h_2) = \c_{1}(h'_2) \geq 0$, so there exists a path $p_4 = (1,w_4)$ from $1$ to $h'_2$ contained in $\G(\Pi_1,\mathcal{X}_1)_{\c_1}$. 

Concatenating the paths $p_1,p_2,p_3,p^{-1}_4$ we obtain a path $(1,w_1 w_2 w_3 w^{-1}_4)$    in $\G(\Pi,\mathcal{X})$ from $1$ to \[ g'_{1} \cdot h_1^{-1} \cdot w(k) \cdot h'_2 \cdot s^{m} \cdot h_{2}^{-1} = g_{1} \cdot h_1 \cdot h_1^{-1} \cdot w(k) \cdot s^{m} = g_1 \cdot g_2. \]
One can verify from the construction above that each vertex of the path is contained in $\Pi_{\c}$. Alternatively, one can use the valuation function on the set of words in $\mathcal{X}$ (\cite[Section A2.2]{St13}), which measures the lowest $\c$--value over vertices of a path starting at $1$, to get (using  \cite[Equations A2.7]{St13})

\begin{multline*} 
v_{\c}(w_1 w_2 w_3 w^{-1}_4) =   \\ 
  = \mbox{min} \{ v_{\c}(w_1), \c(w_1) +  v_{\c}(w_2), 
\c(w_1 w_2) + v_{\c}(w_3), \c(w_1 w_2 w_3) + v_{\c}(w^{-1}_4) \} = \\
= \mbox{min} \{0, \c(g'_1) + 0, \c(g'_1 \cdot g'_2 \cdot s^{-m}) + 0, \c(g'_1 \cdot g'_2) + v_{\c}(w_4) - \chi(w_4) \} = \\
=  \mbox{min} \{0, \c(g'_1),  \c(g'_1 \cdot g'_2), \c(g'_1 \cdot g'_2) - \c(h_2) \} = \\
= \mbox{min} \{0, \c(g'_1),  \c(g'_1 \cdot g'_2), \c(g_1 \cdot g_2) \} = 0. \\
 \end{multline*}

We now assume that the Lemma holds for $n-1$; let $g$ be like in Eq. (\ref{eq:deco}) and denote $g'$ the product of the first $2m-2$ factors. There exist elements $h_0,h_1,h_2 \in K$ contained in $\Pi_{\c}$ such that $g' \cdot h_0$, $g'_1 = h_{0}^{-1} \cdot g_{1,n} \cdot h_{1}$, $g'_2 = h_{1}^{-1} \cdot g_{2,n} \cdot h_{2}$ are contained in $\Pi_{c}$. By the inductive hypothesis, there is a path in $p' = (1,w')$ from $1$ to $g' \cdot h_0$ contained in  $\G(\Pi,\mathcal{X})_{\c}$. Moreover, there exist paths $p_i = (1,w_i), i = 1,\dots4$ (that mirror the role of the similarly--named paths for the case $n=1$) with the property that: 
\begin{itemize}
\item $p_1 = (1,w_1)$ runs from $1$ to $g'_1 \in \Pi_1$ in $\G(\Pi_1,\mathcal{X}_1)_{\c_1}$;
\item $p_2 = (1,w_2)$ runs from $1$ to $h_1^{-1} \cdot w(k) \cdot h'_2 \in K$ in $\G(\Pi_1,\mathcal{X}_1)_{\c_1}$, where $g_{2,n} = w(k) \cdot s^{m}$, and $h'_2 = f^{m}(h_2)$;
\item $p_3 = (1,w_3)$ runs from $1$ to $s^m$ in $\G(\Pi,\mathcal{X})_{\c}$;
\item $p_4 = (1,w_4)$ runs from $1$ to $h_2' \in K$ in $\G(\Pi_1,\mathcal{X}_1)_{\c_1}$.
\end{itemize}

Much as before, the concatenation of $p',p_1,p_2,p_3,p_4^{-1}$ yields a path from $1$ to $g' \cdot g_{1,n} \cdot g_{2,n} = g$ contained in $\G(\Pi,\mathcal{X})_{\c}$.
\end{proof} 

\begin{remark} Note that in Lemma \ref{lem:nonfib} we do not, nor can we, assume that $\c_{2} \in \S^{1}(\Pi_2)$; the regular characters on $\Pi$ provided by that Lemma may appear at first sight surprising. However, for instance, a careful analysis based on Equation (\ref{eq:sigmaprod}) of the BNS invariants of $F_2 \times F_2$, thought of as free amalgamated product of two copies of $F_2 \times \Z$ (whose BNS invariant is easily computed), reveals that there exist regular characters that restrict to exceptional ones on one (but not both) of the factors. In fact, the combination of Lemmata \ref{lem:streb} and \ref{lem:nonfib} provides the entirety of $\S^{1}(F_2 \times F_2)$.
\end{remark}

There is another technical lemma that guarantees the existence of a presentation of the group $\G$ that will be convenient in what follows. In order to state it, we will introduce a new definition.

\begin{definition} Let $\G$ be a finitely generated group. Denote $\op{ab}\colon \G \to H_{1}(\G;\Z)/\mbox{Tor}$ the maximal free abelian quotient map. We will say that a presentation of $\G$ with a generating set $(h_1,\dots,h_m,g_{1},\dots,g_r)$ is \textit{adjusted to $\op{ab}$} if $\op{ab}(h_i), i = 1,\dots,m$ is a basis of $H_{1}(\G;\Z)/\mbox{Tor} \cong \Z^m$ and $g_{1},\dots,g_r \in \mbox{ker} \op{ab}$.
\end{definition}

For instance, the usual presentation of a surface group is adjusted to $\op{ab}$, with $r = 0$.
The following lemma shows that such a presentation always exists.
 It is certainly well--known, but we provide a proof for completeness.

\begin{lemma} \label{lem:base} Let $\G$ be a finitely generated group; then $\G$ admits a presentation adjusted to $\op{ab}$.
\end{lemma}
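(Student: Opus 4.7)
The plan is to start from an arbitrary finite generating set of $\G$, use the fact that surjections onto free abelian groups split to extract $m$ generators whose images form a basis of $H_1(\G;\Z)/\mathrm{Tor}$, and then correct the remaining generators by multiplying by appropriate words in the first $m$ so as to land in $\ker\op{ab}$.

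More concretely, fix any finite generating set $x_1,\dots,x_n$ of $\G$, and let $m = \rk H_1(\G;\Z)$. The composite $F_n \twoheadrightarrow \G \stackrel{\op{ab}}{\twoheadrightarrow} \Z^m$ induces a surjection $\Z^n \twoheadrightarrow \Z^m$ of free abelian groups; since $\Z^m$ is free, this surjection splits. A splitting picks out $m$ elements of $\Z^n$ mapping to a chosen basis of $\Z^m$, and these lifts are integer linear combinations of the standard basis of $\Z^n$. Reinterpreting these linear combinations as words in $x_1,\dots,x_n$, we obtain elements $h_1,\dots,h_m \in \G$ such that $\op{ab}(h_1),\dots,\op{ab}(h_m)$ is the chosen basis of $H_1(\G;\Z)/\mathrm{Tor}$.

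Next, for each $i = 1,\dots,n$, write $\op{ab}(x_i) = \sum_{j=1}^{m} c_{ij}\,\op{ab}(h_j)$ for uniquely determined integers $c_{ij}$, and set
\[
g_i \,:=\, x_i \cdot h_1^{-c_{i1}} \cdots h_m^{-c_{im}}.
\]
By construction $\op{ab}(g_i) = 0$, so $g_i \in \ker\op{ab}$. Since every $x_i$ can be recovered from the collection $\{h_1,\dots,h_m,g_1,\dots,g_n\}$, this collection generates $\G$; a standard argument using the finite presentability of finitely generated groups (or simply taking the quotient of the free group on these generators by all relations they satisfy in $\G$) then produces the desired presentation adjusted to $\op{ab}$, with $r \le n$.

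There is no real obstacle here; the only point to verify carefully is the existence of the splitting, which is immediate because $\Z^m$ is projective in the category of abelian groups. The result is used only to bring presentations of $\G$ into a normal form compatible with $\op{ab}$, so the specific structure of the $g_i$ beyond $g_i \in \ker\op{ab}$ is not needed.
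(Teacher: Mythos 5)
Your proof is correct and follows essentially the same route as the paper: choose elements $h_1,\dots,h_m$ lifting a basis of $H_1(\G;\Z)/\mathrm{Tor}$ and then multiply each remaining generator by a word in the $h_j$ to push it into $\ker\op{ab}$. One minor slip: finitely generated groups need \emph{not} be finitely presentable, so your appeal to ``finite presentability'' should be dropped, but your alternative (take the free group on $\{h_j, g_i\}$ modulo all relations holding in $\G$) is exactly what the definition requires and is what the paper does.
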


\begin{proof}
Since $\Gamma$ is finitely generated there exists an epimorphism $\pi \colon \langle y_1,\dots,y_r\rangle \to \Gamma$ (where $\langle \dots\rangle$ is the free group on the given set). Let $m=b_1(\Gamma)$. We pick $h_1,\dots,h_m\in \Gamma$ such that $\op{ab}(h_1),\dots,\op{ab}(h_m)$ form a basis of  $H_1(\Gamma)/\mbox{Tor}$. Let $\sigma \colon \langle x_1,\dots,x_m\rangle \to \Gamma$ be the unique homomorphism with $\sigma(x_i)=h_i$, $i=1,\dots,m$.
For $j=1,\dots,r$ we pick $w_j\in \langle x_1,\dots,x_m\rangle $ with $\op{ab}(\sigma(w_j))=\op{ab}(\pi(y_j))$. Let  $\tau \colon \langle x_{m+1},\dots,x_{m+r}\rangle \to \Gamma$ be the unique homomorphism with $\tau(x_{m+j})=\pi(y_j) \cdot \sigma(w_j)^{-1}$. Note that the collection $\{\sigma(x_i),\tau(x_{m+j}), i=1,\dots,n, \,\ j=1,\dots,r\}$ is a generating set for $\G$. It follows that the epimorphism $\sigma*\tau\colon \langle x_1,\dots,x_{m+n}\rangle \to \Gamma$ defines a presentation of $\G$ with the desired properties.
\end{proof}

Note that, given a presentation adjusted to $\op{ab}$, we can and we will associate a basis $e_i, i = 1,\dots,m$ for $\mbox{Hom}(\G,\Z) = H^{1}(\G;\Z)$ via $e_{i}(h_{j}) = \delta_{ij}, i,j = 1,\dots,m$.

Now we are in position to prove our main result.

\begin{theorem} \label{thm:mainthm}
Let $G$ be a finitely generated group that can be written as group extension  \begin{equation} \label{eq:ses2}  1 \longrightarrow K \stackrel{i}{\longrightarrow}  G \stackrel{f}{\longrightarrow} \G \longrightarrow 1 \end{equation} where $K$ is a finitely generated group. Assume furthermore that $b_1(G) > b_1(\G) = m > 0$. Then $G$ algebraically fibers. 
\end{theorem}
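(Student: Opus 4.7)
My plan is to construct a surjective character $\phi \colon G \to \Z$ such that both $[\phi]$ and $[-\phi]$ lie in $\S^{1}(G)$, which by the Bieri--Neumann--Strebel criterion implies that $G$ algebraically fibers. First I invoke Lemma \ref{lem:base} to fix a presentation of $\G$ adjusted to $\op{ab}$, with generators $h_{1},\dots,h_{m},g_{1},\dots,g_{r}$; I let $e_{1},\dots,e_{m} \in H^{1}(\G;\Z)$ denote the dual basis and pick lifts $\tilde h_{i}, \tilde g_{j} \in G$. The hypothesis $b_{1}(G) > b_{1}(\G) = m$, combined with the LHS sequence (\ref{eq:lhs}), forces the image of $H_{1}(K;\R)_{\G}\to H_{1}(G;\R)$ to be nonzero, so dually I select a character $\psi \in H^{1}(G;\Z)$ whose restriction $\psi|_{K}$ is nontrivial. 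Replacing $\psi$ by $\psi - \psi(\tilde h_{1}) f^{*}e_{1}$ leaves $\psi|_{K}$ unchanged (since $f^{*}e_{1}|_{K}=0$) while producing $\psi(\tilde h_{1}) = 0$; after rescaling I set $\phi := \psi$.

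The core of the argument will be to view $G$ as the amalgamated free product $G = \Pi_{1} *_{K} \Pi_{2}$, where $\Pi_{2} := \langle K, \tilde h_{1}\rangle \cong K \rtimes \Z$ is the HNN extension with stable letter $\tilde h_{1}$ and automorphism given by conjugation, and $\Pi_{1} := \langle K, \tilde h_{2},\dots,\tilde h_{m}, \tilde g_{1},\dots, \tilde g_{r}\rangle$. The character $\phi$ then satisfies the hypotheses of Lemma \ref{lem:nonfib}: $\phi|_{K}\neq 0$ and $\phi(\tilde h_{1})=0$ by construction, while $\phi|_{\Pi_{1}} \in \S^{1}(\Pi_{1})$ will be proved by induction on $m$. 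Here the inductive step uses that $\Pi_{1}$ fits into an extension $1 \to K \to \Pi_{1} \to \G' \to 1$ with $\G' := \langle h_{2},\dots,h_{m},g_{1},\dots,g_{r}\rangle \leq \G$ satisfying $b_{1}(\G') \leq m-1$, and the hypothesis $\phi|_{K}\neq 0$ forces $b_{1}(\Pi_{1}) > b_{1}(\G')$, so the inductive hypothesis applies; the base case $m=1$, in which $\G' \subseteq \ker \op{ab}$, is handled by a direct argument. Running the same construction with $-\phi$ yields $[-\phi]\in \S^{1}(G)$, so $\phi$ algebraically fibers $G$.

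The principal obstacle is justifying the decomposition $G = \Pi_{1} *_{K} \Pi_{2}$: in general, this holds only up to a surjection $\Pi_{1} *_{K}\Pi_{2}\twoheadrightarrow G$, because relations in $\G$ that mix $h_{1}$ with the other generators lift to additional relations in $G$ not present in the free amalgamation. Overcoming this difficulty will require either a refined decomposition of $G$ --- for instance, iteratively assembling $G$ through a sequence of amalgamated products compatible with the adjusted presentation of $\G$, so that at each stage only the appropriate relations are introduced --- or a direct Cayley graph argument patterned on the proof of Lemma \ref{lem:nonfib}, exploiting the fact that $K$ is normal in $G$ (strictly stronger than being the amalgamating subgroup of a free amalgamation) to gain additional conjugation maneuvering room in the positive part of the Cayley graph.
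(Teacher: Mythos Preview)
Your overall strategy---choose a character nontrivial on $K$, decompose $G$ as an amalgam over $K$, and apply Lemma~\ref{lem:nonfib}---is the right one, and you have correctly identified the principal obstacle: the decomposition $G=\Pi_1 *_K \Pi_2$ simply does not hold for arbitrary $\Gamma$. The paper resolves this not by a more refined decomposition of $G$ itself, nor by a bespoke Cayley-graph argument, but by a clean change of viewpoint: pull the extension back along the epimorphism $F_n\twoheadrightarrow\Gamma$ (with $n=m+r$) coming from the adjusted presentation, obtaining
\[
1\longrightarrow K\longrightarrow \Pi\longrightarrow F_n\longrightarrow 1.
\]
Because the base is now \emph{free}, $\Pi$ genuinely decomposes as an iterated amalgam $\Pi_1 *_K\cdots *_K\Pi_n$ with each $\Pi_i\cong K\rtimes_{f_i}\Z$. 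One constructs an algebraic fibration $\beta$ of $\Pi$ that is pulled back from $G$, and then descends: the surjection $\Pi\twoheadrightarrow G$ carries $\ker\beta$ onto $\ker b$, forcing the latter to be finitely generated. This is precisely the ``iterative assembly'' you gesture at, but the assembly happens one level up, in $\Pi$, where it is free of extra relations.

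There is also a second, independent gap in your inductive scheme that you did not flag. First, $\Gamma'=\langle h_2,\dots,h_m,g_1,\dots,g_r\rangle$ is a \emph{subgroup} of $\Gamma$, and subgroups need not have smaller first Betti number; for instance if $\Gamma=BS(1,2)=\langle a,g\mid aga^{-1}=g^2\rangle$ then $m=1$ but $\Gamma'=\langle g\rangle\cong\Z$ has $b_1=1>m-1$, so the induction does not terminate. Second, even when the inductive hypothesis applies to $\Pi_1$, it yields only that \emph{some} character of $\Pi_1$ lies in $\Sigma^1(\Pi_1)\cap(-\Sigma^1(\Pi_1))$, not that your particular $\phi|_{\Pi_1}$ does; and the stronger statement ``every $\phi$ with $\phi|_K\ne 0$ lies in $\Sigma^1$'' is false already for $G=F_2\times\Z$. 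The paper sidesteps both issues by working directly with the mapping-torus factors $\Pi_i=K\rtimes\Z$: on $\Pi_1$ the character is taken to be a small perturbation $\alpha_1+\mu\gamma$ of the tautological fibration $\alpha_1$, which lies in $\Sigma^1(\Pi_1)$ by openness, and on the remaining factors Lemma~\ref{lem:nonfib} is applied iteratively. No induction on $b_1(\Gamma)$ is needed.
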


\begin{proof} As $G$ is finitely generated, so is $\G$. We choose a presentation of $\G$ adjusted to $\op{ab}$ and correspondingly, denoting $n = m + r$ we have an epimorphism $F_n \to \G$.  

This epimorphism induces a diagram
\begin{equation} \label{eq:epi} \xymatrix{
1\ar[r]&
K \ar[r]\ar[d] &
\Pi \ar[d] \ar[r] & F_n \ar[d] \ar[r]&1\\
 1\ar[r]& K 
 \ar[r]& G \ar[r]  &
\G \ar[r]  &1 } \end{equation}
where all vertical maps are epimorphisms. 

Observe that we can write $\Pi$ as amalgamated product \[ \Pi = \Pi_{1} \ast_K \Pi_2 \ast_K \dots \ast_K \Pi_n \] where each $\Pi_i$ has the form of HNN extension  $\Pi_i = \langle K,s_i|s_i k s_i^{-1} = f_i(k) \rangle$ for some automorphism $f_i\colon K \to K$. There are many ways to see that explicitly, for instance by applying to $\Pi$ the method to write a presentation of  group extensions, as described e.g. in \cite[Section 10.2]{Jo97}.

The mapping torus structure of $\Pi_i$ guarantees the existence of a sequence 
\[  1 \longrightarrow K \longrightarrow \Pi_i \stackrel{\a_i}{\longrightarrow} \Z \longrightarrow 1. \] 
As they vanish on $K$, each of these maps $\a_i\colon \Pi_i \to \Z$ extends to an epimorphism (that we denote with the same symbol) $\a_{i}\colon \Pi \to \Z$. The first $m$ of these elements, identified with a primitive elements of  $H^1(\Pi;\Z)$, can be thought of as pull--back of the classes $e_i  \in H^1(\G;\Z)$ built from the adjusted presentation of $\G$ under the monomorphism $H^1(\G;\Z) \to  H^{1}(F_n;\Z) \to H^{1}(\Pi;\Z)$; by commutativity of the diagram in (\ref{eq:epi}) we can also view $\a_i$ as pull-back of the class $a_i = f^{*}e_i \in H^{1}(G;\Z)$. The remaining $r$ classes $\a_i \in H^1(\Pi;\Z)$, 
that by construction do not pull back from $\G$ nor $G$, will play little role in what follows.  

As the action of $F_{n}$ on  $H^{1}(K;\Z)$ factors through $F_n \to \G$, the assumption  that $b_1(G) > b_1(\G)$ entails that the $\a_i$'s do not generate the entire group $H^1(\Pi;\Z)$, or equivalently the image of $H_1(K;\R)_{F_n} \to H_1(\Pi;\R)$ is nonzero. In particular we can assume the existence of a class $\gamma \in H^{1}(\Pi;\Z)$, pull back of a class $c \in H^1(G;\Z)$ which is not in the image of $H^{1}(\G;\Z) \to H^{1}(G;\Z)$. Restricted to each $\Pi_{i}$, the class 
$\gamma_{i} = \gamma|_{\Pi_{i}}\colon \Pi_{i} \to \Z$ is not a multiple of $\a_i$, as it does not vanish on each $K \unlhd \Pi_{i} \leq \Pi$: in fact, for each $i$, the image of $\gamma_i$ under the map $H^{1}(\Pi_i;\Z) \to H^{1}(K;\Z)^{\Z}$ is nontrivial, as the inclusion $\Z \to F_n$ induces an epimorphism $H^{1}(K;\Z)^{\Z} \to H^{1}(K;\Z)^{F_n}$ and $\gamma$  has nontrivial image in the latter, as the action of $F_{n}$ on  $H^{1}(K;\Z)$ factors through $F_n \to \G$.

 Consider now the partial amalgamation \[ \Pi_A := \Pi_{1} \ast_K \Pi_2 \ast_K \dots \ast_K \Pi_m \leq \Pi \] of the first $m$ factors of $\Pi$. 
For each factor $\Pi_i,  i = 1,\dots,m$, we define the classes \[ \b_i := \a_i + \mu \gamma_i\colon  \Pi_i \longrightarrow \R \] where $\mu$ is a rational number.

As these classes agree on the amalgamating subgroups $K$, they define a class $\b_A\colon \Pi_A \to \R$ by the condition $\b_A|_{\Pi_i} = \b_i$.  Without loss of generality, as $\mu$ is rational we can assume that the resulting $\b_A\colon \Pi_{A} \to \R$ has values in $\Z$, and by construction 
$\b_{A}$ is the pull back of a class $b_{A} \in H^1(G;\Z)$. 

At this point we want to use Lemma \ref{lem:streb} to show that, choosing $\mu$ small enough, the character determined by $\b_A$ is in 
$\Sigma^{1}(\Pi_A)$.  First, as $[\a_i]$ is in $\Sigma^{1}(\Pi_i)$ and the latter is open in $S(\Pi_i)$,    we can assume that for $\mu$ small enough each $[\b_i]$ is still in $\Sigma^{1}(\Pi_i)$. Next, we claim that $[\b]$ is nontrivial on each amalgamating subgroup $K$. In fact on each $\Pi_i$ we have a diagram 
\[ \xymatrix@=9pt{ 
& & & 1  \ar[dr]  & &  &  &  \\
 & & & & K \ar[dr]  \ar[drrr]^{\b_i} & & & & \\ 
& 1\ar[rr] &  & \mbox{ker}  \hspace{1pt}  \b_i  \ar[rr] & & \Pi_i \ar[rr]_{\b_i} \ar[dr]_{ \a_i}  &  & \Z   \ar [rr] & & 0 \\ 
 &  &  & & & & \Z \ar[dr] & & \\
& & &   & &  &   & 0
  } \] 
By contradiction, if $\b_i(K) = 0$ we would have $K \unlhd \mbox{ker}  \hspace{1pt} \b_{i}$ and there would be a short exact sequence \[ 1 \longrightarrow \mbox{ker}  \hspace{1pt} \b_{i}/ K \longrightarrow \Pi_i/K \longrightarrow \Pi_i/\mbox{ker}  \hspace{1pt} \b_{i} \longrightarrow 1; \]
as the latter two groups are infinite cyclic, a surjection is an isomorphism from which it would follow that $K \cong \mbox{ker}  \hspace{1pt}  \b_{i}$.
But this would imply that $\b_i$ is a multiple of $\a_i$, and then so would $\gamma_{i}$, contrary to our assumption.
Next we can invoke (inductively) Lemma \ref{lem:streb}, which asserts that $[\b_A] \in \Sigma^{1}(\Pi_A)$ as long as $[\b_A|_{\Pi_i}] = [\b_i] \in \Sigma^{1}(\Pi_i)$ and $[\b_A]$ is nontrivial on each amalgamating subgroup $K$. Therefore $[\b_A] \in \Sigma^{1}(\Pi_A)$. 

The argument above can be applied \textit{verbatim} for the class $-\beta_A \in H^{1}(\Pi_A;\Z)$; the key point is that, by construction, also the character $[-\a_i] \in \S^{1}(\Pi_i)$. Summing up, both  $[\b_A],[-\b_A] \in \S^1(\Pi_A)$, hence $\mbox{ker} \hspace{1pt} \b_A$ is finitely generated. 

In the case where we can choose $r = 0$ (e.g.\ when $\G$ is a surface group, or the free group itself) we have $\Pi_A = \Pi$ and we would be (essentially) done. But if $r > 0$ we have another hurdle, namely choosing an extension of $\b_A$ to $\Pi$. Obviously, we could follow the pattern above and define $\b_i := \a_i + \mu \gamma_i$ also for the remaining factors. This would give us an algebraic fibration of $\Pi$, but the fibration would not descend to $G$: the classes $\a_i, i = m+1,\dots,n$ are not pull-back of classes on $G$!

The correct way to proceed is, in some sense, counterintuitive. In fact, on the partial amalgamation of the last $r$ factors of $\Pi$ \[ \Pi_B := \Pi_{m+1} \ast_K \Pi_2 \ast_K \dots \ast_K \Pi_n \leq \Pi, \] 
we define $\b_B := \mu \gamma|_{\Pi_B}$. As $\mu$ is rational, we can assume (after simultaneous rescaling if necessary) that $\b_{A}\colon \Pi_{A} \to \Z$, $\b_{B}\colon \Pi_{B} \to \Z$ are epimorphisms that satisfy $\b_{A}|_K = \mu \gamma|_{K} = \b_{B}|_K$, as on $K$ the $\a_i$ vanish, hence give a well--defined epimorphism \[ \b\colon \Pi = \Pi_A *_K \Pi_B \to \Z. \] 
This epimorphism factorizes thorough $\Pi \to G$, as by construction it is a sum of classes that do. Note that
(and this is the key property) on $\Pi_{i}, i = m+1,\dots,n$ we have $\b_{i}(s_i) = 0$: in fact for $i = m+1,\dots,n$ the epimorphism $\gamma_{i}\colon \Pi_i \to \Z$ sends the stable letter $s_i$ to $0 \in \Z$, as the image of $s_i$ in $\G$ belongs to $\mbox{ker} \op{ab}$.

We can now apply inductively Lemma \ref{lem:nonfib} to $\b$. As first step, consider $\Pi_{A} \ast_{K}  \Pi_{m+1}$; $[\b_A] \in \S^{1}(\Pi_A)$ and $[\b_{K}] \in S(K)$ while $[\b_{m+1}] (s_{m+1}) = 0$; Lemma \ref{lem:nonfib} gives that $[\b_{\Pi_A \ast_{K}  \Pi_{m+1}}] \in \S^{1}(\Pi_{A} \ast_{K}  \Pi_{m+1})$, and we can then repeat the process for the remaining factors $\Pi_{i}$. 

As before, we can repeat this argument for the class $-\b \in H^{1}(\Pi;\Z)$, to deduce that both $[\b],[-\b] \in \S^{1}(\Pi)$, hence $\mbox{ker} \hspace{1pt} \b$ is finitely generated.

Finally, as $\b\colon \Pi \to \Z$ factorizes through $\Pi \to G$, we have the diagram

\begin{equation} \xymatrix{
1\ar[r]&
\mbox{ker}  \hspace{1pt}  \b \ar[r]\ar[d] &
\Pi \ar[d] \ar[r]^{\b} & \Z \ar[d]^{\cong} \ar[d] \ar[r] & 0 \\
1\ar[r]&
\mbox{ker}  \hspace{1pt}  b \ar[r]\ar[d] &
G \ar[d] \ar[r]^{b} & \Z  \ar[r] & 0 \\
 & 1 & 1  &  & } \end{equation}
where $b \in H^{1}(G;\Z)$ which entails that the finitely generated  group $\mbox{ker}  \hspace{1pt}  \b$ surjects onto $\mbox{ker}  \hspace{1pt}  b$, which is therefore finitely generated as well. By suitably scaling $b \in H^{1}(G;\Z)$ we can assume it to be primitive, hence it represents an algebraic fibration. 
\end{proof}

\begin{remarks} 
\begin{enumerate} \item The reader may have noticed that in the proof of Theorem \ref{thm:main} 
we actually need to make use of only one (or any subcollection) of the terms $\a_{i} \in H^{1}(\Pi;\Z), i = 1,\dots,m$ (say $i=1$) and not all simultaneously. This follows by applying Lemma \ref{lem:nonfib} inductively to the class $\beta_1 := \a_1 + \mu \gamma_1$ starting with $\Pi_{1} *_{K} \Pi_2$ and repeating the argument until exhausting $\Pi$. This bypasses the use of Lemma \ref{lem:streb}.
\item In Version 2 of \cite{KrWa19} the authors have now provided a concurrent proof for the case $n = \mbox{rank}(\G) = b_1(\G)$ of Theorem \ref{thm:main}.
\item Note that this theorem holds true for simple reasons when $\G$ algebraically fibers.
\end{enumerate}
\end{remarks}

The result above has some consequences in the study of the coherence of the fundamental group of a surface bundle over a surface $F \hookrightarrow X \to B$. In fact, using Theorem \ref{thm:main} we can give a proof of a corollary, that extends to surface bundles with (virtual) Albanese dimension $2$ the results of \cite{FV19}:

\begin{corollary}  \label{cor} Let $F \hookrightarrow X \to B$ be a surface bundle over a surface with both base and fiber of genus greater than $1$. If its Albanese dimension $a(X) = 2$, then the first  BNS-invariant and the second BNSR-invariant of its fundamental group $G = \pi_1(X)$ satisfy the relation 
\[  \S^2(G) \subsetneq \S^1(G) \subsetneq S(G), \] and $G$ is noncoherent. If $va(X) = 2$, $G$ is noncoherent.
\end{corollary}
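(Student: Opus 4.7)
The plan is to break the statement into three parts: (i) $\Sigma^{1}(G)\subsetneq S(G)$, (ii) the strict inclusion $\Sigma^{2}(G)\subsetneq \Sigma^{1}(G)$ together with noncoherence of $G$ in the case $a(X)=2$, and (iii) noncoherence of $G$ when $va(X)=2$. I would handle (i) directly, derive (ii) by combining the algebraic fibration provided by Theorem~\ref{thm:sbs} with an Euler characteristic obstruction that prevents the kernel from being finitely presented, mirroring the argument for Kodaira fibrations in \cite{FV19}, and then deduce (iii) from (ii) via a finite cover argument.

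For part (i), the base $B$ has genus $\geq 2$, so $\Gamma=\pi_1(B)$ is a hyperbolic surface group with $\Sigma^{1}(\Gamma)=\emptyset$, and the pullback property stated in the introduction (\cite[Proposition A.4.5]{St13}) sends every nonzero class of $S(\Gamma)$ to an exceptional class in $\Sigma^{1}(G)^{c}$; this set is nonempty because $b_1(B)>0$. For part (ii), Theorem~\ref{thm:sbs} provides a primitive epimorphism $\phi\colon G\to\Z$ with finitely generated kernel $H$, so $[\pm\phi]\in\Sigma^{1}(G)$. The central step is to show that $H$ is \emph{not} finitely presented: the ambient space $X$ is a closed aspherical $4$-manifold with
\[ \chi(G)=\chi(X)=\chi(F)\cdot\chi(B)=(2-2g_F)(2-2g_B)\geq 4>0, \]
and, as in \cite{FV19}, a group admitting an algebraic fibration with finitely presented kernel should have vanishing Euler characteristic — a contradiction. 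From the fact that $H$ is not finitely presented one then gets both that at least one of $[\pm\phi]$ lies in $\Sigma^{1}(G)\setminus\Sigma^{2}(G)$, yielding $\Sigma^{2}(G)\subsetneq\Sigma^{1}(G)$, and that $H\leq G$ is a finitely generated, non-finitely-presented subgroup, so $G$ is noncoherent.

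The main technical hurdle I anticipate is the implication ``finitely presented kernel $\Rightarrow \chi(G)=0$'': finite presentability is only the finiteness property $F_2$, while direct multiplicativity of Euler characteristics for a short exact sequence $1\to H\to G\to \Z\to 1$ requires type $F$ (or at least $FP$). I would close the gap as in \cite{FV19}, either by exploiting the $PD_4$-structure of $G$ — where a $\Z$-quotient with finitely presented kernel imposes strong algebraic constraints forcing $H$ to be a $PD_3$-group — or by invoking $L^{2}$-vanishing theorems (Cheeger–Gromov, L\"uck) for normal subgroups with amenable quotient, since $G/H=\Z$ is amenable. Finally, for part (iii), if $va(X)=2$ then a finite cover $X'\to X$ satisfies $a(X')=2$ and is again a surface bundle over a surface with both genera $\geq 2$; applying part (ii) to $X'$ yields a finitely generated, non-finitely-presented subgroup of $G'=\pi_1(X')\leq G$, which remains finitely generated and not finitely presented in $G$, establishing noncoherence of $G$.
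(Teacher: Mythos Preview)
Your proposal is correct and follows essentially the same route as the paper: obtain an algebraic fibration from Theorem~\ref{thm:mainthm}, use the positive Euler characteristic to obstruct finite presentability of the kernel, read off $\Sigma^2(G)\subsetneq\Sigma^1(G)$ and noncoherence, and pass to a finite cover for the virtual case. The one substantive simplification the paper makes is at your ``main technical hurdle'': rather than reconstructing the obstruction via $PD_4$/$PD_3$ arguments or $L^2$-vanishing, it invokes \cite[Theorem~4.5(4)]{Hil02} directly, which for a $PD_4$-group $G$ with an epimorphism to $\Z$ asserts that the kernel is of type $FP_2$ if and only if $\chi(G)=0$. This packages exactly the argument you sketch. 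The paper also supplies a second, independent proof of noncoherence that bypasses BNS invariants entirely: one restricts to a subgroup $\Lambda\leq G$ sitting over some $F_2\leq\Gamma$, observes that $H_1(K;\R)_{F_2}\twoheadrightarrow H_1(K;\R)_{\Gamma}\ne 0$, and then applies \cite[Theorem~4.5]{KrWa19} to conclude $\Lambda$ (hence $G$) is noncoherent.
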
`
\begin{proof} Let $X$ have Albanese dimension $2$; by Theorem~\ref{thm:mainthm}, there exists  an epimorphism $b\colon G \to \Z$ with finitely generated kernel $\mbox{ker} \hspace{1pt} b$, hence $[b],[-b] \in \Sigma^1(G)$. By \cite[Theorem 4.5(4)]{Hil02} $\mbox{ker} \hspace{1pt} b$ would have type $FP_2$ if and only if the Euler characteristic $\c(X) = 0$; as here $\chi(X) = (2g(F)-2)(2g(B)-2) > 0$,  $\mbox{ker} \hspace{1pt} b$ is not finitely presented. Therefore, at least one among  $[b],[-b]$ is not in 
$\Sigma^2(G)$.

We want to give a second, and somewhat different, proof of noncoherence, based on the work of \cite{KrWa19} which avoids the use of the BNS invariant of $G$.

Consider any subgroup $F_2 \leq \G$ and the corresponding commutative diagram
\begin{equation} \xymatrix{
1\ar[r]&
K \ar[r]\ar[d] &
\Lambda \ar[d] \ar[r] & F_2 \ar[d] \ar[r]&1\\
 1\ar[r]& K 
 \ar[r]& G \ar[r]  &
\G \ar[r]  &1 } \end{equation} 
with self-explaining notation where all vertical arrows are monomorphisms.  Now, by standard group homology \[ H_{1}(K;\R)_{F_2}  \longrightarrow   H_1(K;\R)_{\G}\] is an epimorphism. As discussed in the introduction, the assumption $a(X) = 2$  is equivalent to the nontriviality of $H_1(K;\R)_{\G}$, and so implies the nontriviality of $H_{1}(K;\R)_{F_2}$. At this point, we invoke \cite[Theorem 4.5]{KrWa19} where they show that if $K$ is a group that does not algebraically fiber (as is our case) a group $\Lambda$ that is $K$--by--$F_2$ and has nontrivial $H_{1}(K;\R)_{F_2}$ is noncoherent. As $\Lambda$ is noncoherent, so is $G$.

If $va(X) = 2$, a finite cover of $X$ will satisfy this property, hence the fundamental group of $X$ will not be coherent as well.

\end{proof}

One can ask whether the techniques of Theorem \ref{thm:main} can be extended to a complete characterization of the BNS invariant $\S^1(G)$, at least for the case of surface bundles over a surface. This appears challenging on several grounds. The first can be appreciated by pointing out the complexity of the situation already in the case of Kodaira fibrations. Delzant~\cite{De10} shows that as long as $a(X) = 2$, the first BNS invariant is the complement of the pull-back of the character spheres of the bases of all irrational pencils $h_i\colon X \to B_i$ with base a hyperbolic orbisurface. (This includes, whenever possible, surface bundle maps with base of genus bigger than $1$.)  From the group-theoretical viewpoint, these correspond to short exact sequences for $G = \pi_1(X)$  of the form 
\begin{equation} \label{eq:penseq} 1 \longrightarrow M_i \longrightarrow  G \stackrel{h_i}{\longrightarrow} C_i \longrightarrow 1 \end{equation}
where $C_i$ is the fundamental group of the hyperbolic orbisurface $B_i$ and $M_i$ is a finitely generated group (see e.g. \cite{Cat03}). As a consequence, $\Sigma^{1}(G)$ is the complement of a finite collection of spheres of codimension at least $2$, pull-back of the character spheres of the orbisurfaces. An example of this phenomenon arises already in the case of doubly fibered Kodaira fibrations (such as Atiyah and Kodaira's original examples). Moreover there exist examples even of Kodaira fibrations which admit also pencils with multiple fibers (see \cite{Br18}).
  But for the case of (non--K\"ahler) surface bundles the situation can be even more complex, as the following construction shows. 

\begin{proposition} There exists a surface bundle $F \hookrightarrow X \stackrel{f}{\rightarrow} B$ with base and fiber of genus greater than $1$ whose fundamental group $G$ admits an epimorphism $g\colon G \to F_2$ such that $H^1(G;\R) = f^{*}H^{1}(\G;\R) \oplus g^{*}H^{1}(F_2;\R)$, where $\G := \pi_1(B)$,  and so that the set of exceptional characters contains two disjoint spheres 
\[  f^{*}(\S^{1}(\G)^{c}) \cup g^{*}(\S^{1}(F_2)^{c})\,\subset\, 
\Sigma^{1}(G)^{c}. \]
Moreover for any epimorphism $h\colon G \to C$ onto the fundamental group of a  hyperbolic orbisurface $C$ such that the kernel is finitely generated we have  $g^{*}(\S^{1}(F_2)^{c})\cap h^*(\S^1(C)^c)=\emptyset$.  \end{proposition}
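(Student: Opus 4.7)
The plan is to construct an explicit $\Sigma_2$-bundle over $\Sigma_2$ realizing the statement. Take $F=\Sigma_2=\partial H_2$, the boundary of a genus-$2$ handlebody, and let $\rho\colon\pi_1(F)\to\pi_1(H_2)=F_2$ be the handlebody projection killing two meridional generators $b_1,b_2$ in a standard symplectic basis $(a_1,b_1,a_2,b_2)$ of $\pi_1(F)$. Pick disjoint meridian curves $m_1,m_2\subset F$ with $[m_i]=b_i$; the associated Dehn twists $T_{m_i}\in\op{Mod}(F)$ commute, preserve $\op{ker}\rho$, and induce the identity on $F_2=\pi_1(F)/\op{ker}\rho$ (each meridian bounds a disk in $H_2$). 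With $B=\Sigma_2$ and standard generators $(A_i,B_i)$, I set the monodromy $\psi\colon\pi_1(B)\to\op{Mod}(F)$ to be $\psi(A_i)=T_{m_i}$, $\psi(B_i)=1$; the surface relation is satisfied trivially. Let $G=\pi_1(X)=\pi_1(F)\rtimes_\psi\pi_1(B)$. Since $\psi$ acts trivially on $F_2$, one has $G/\op{ker}\rho\cong F_2\times\pi_1(B)$, so projection onto the first factor yields the desired $g\colon G\to F_2$.

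Next, I would verify the cohomological splitting. The coinvariant homology $H_1(F;\R)_\Gamma=\R^4/\langle b_1,b_2\rangle\cong\R^2\cong H_1(F_2;\R)$ gives $b_1(G)=b_1(\Gamma)+2$ via the sequence (\ref{eq:lhs}). Any nonzero character in $f^*H^1(\Gamma;\R)\cap g^*H^1(F_2;\R)$ would vanish on $K$ through $f^*$ while restricting to the nonzero $\rho^*\chi_1$ through $g^*$, a contradiction, so $H^1(G;\R)=f^*H^1(\Gamma;\R)\oplus g^*H^1(F_2;\R)$. For the sphere inclusions, both surface groups of genus $\geq 2$ and $F_2$ have empty first BNS invariant, and a direct Cayley-graph lifting (of a disconnection of the positive subgraph along $f$ or $g$) places $f^*S(\Gamma)\cup g^*S(F_2)$ inside $\Sigma^1(G)^c$; disjointness of the two spheres follows from the direct sum decomposition above.

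For the final disjointness from orbisurface pencils, I would assume $h\colon G\to C$ is any epimorphism onto a hyperbolic orbisurface group with finitely generated kernel, and suppose that $0\neq\chi=g^*\chi_1=h^*\chi_2$. Restriction gives $\chi|_K=\rho^*\chi_1\neq 0$, so $h(K)\neq 1$; by Scott's theorem, the finitely generated normal subgroup $h(K)=C'$ has finite index in $C$. The common quotient $G/(K\cdot\op{ker} h)$ identifies with $C/C'$, which is finite, so $h^*H^1(C;\R)\cap f^*H^1(\Gamma;\R)=0$; projecting $h^*H^1(C;\R)$ into $g^*H^1(F_2;\R)$ along $f^*H^1(\Gamma;\R)$ is therefore injective and forces $b_1(C)\leq 2$.

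The key technical step will be a descent argument: conjugation by $\Gamma$ on $K$ corresponds via $h$ to conjugation by $C$ on $C'$, and since $Z_C(C')=1$ in a hyperbolic orbisurface, the induced map $C\to\op{Out}(C')$ factors through the finite quotient $C/C'$. When $h|_K$ is an isomorphism this transports $\psi\colon\Gamma\to\op{Out}(K)=\op{Mod}(\Sigma_2)$ to a map with finite image, contradicting the infinite order of each $T_{m_i}$. When $h|_K$ has nontrivial kernel, the unipotence of $T_{m_i}$ on $H_1(K;\R)$ combined with the finite-order descent forces the induced action on $H_1(C';\R)$ to be the identity, hence $b_1(C')\leq 2$; the sphere-orbifold subcase $b_1(C')=0$ gives $\rho^*\chi_1=(h|_K)^*\chi_2|_{C'}=0$, a contradiction. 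The hardest step, which I expect to be the main obstacle, is the torus-orbifold subcase $b_1(C')=2$, which requires a delicate comparison of the descent relation $T_{m_i}^{|C/C'|}\equiv\op{conj}_{k_i}\pmod{\op{ker}(h|_K)}$ (for suitable $k_i\in K$) with the orbifold presentation of $C'$ to extract the contradiction.
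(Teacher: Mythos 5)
Your construction differs from the paper's (the paper builds $X$ as a fiber sum $S^1\times N\,\#_F\,F\times T^2$ with an explicit presentation of $G$, whereas you take a bundle whose monodromy consists of Dehn twists along meridians of a handlebody), but the two are morally the same: in both cases the monodromy acts trivially on a free quotient $F_2$ of the fiber group, giving $g\colon G\to F_2$ with $g|_K$ onto, $b_1(G)=6$, and the direct sum decomposition. Your arguments for the splitting and for the inclusion of the two disjoint spheres in $\S^1(G)^c$ (pulling back the empty BNS invariants of $\G$ and $F_2$, which is \cite[Corollary B1.8]{St13}) are correct and agree with the paper's.

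The genuine gap is in the final clause. Your plan for ruling out $g^{*}(\S^{1}(F_2)^{c})\cap h^*(\S^1(C)^c)\neq\emptyset$ only gets as far as $h(K)$ being of finite index in $C$ and $b_1(C)\le 2$, and then launches a descent argument through $\op{Out}$ and unipotence whose decisive subcase ($b_1(C')=2$) you explicitly leave open; as written this is not a proof, and it is far from clear that the comparison you describe can be carried out. The missing idea is much simpler and involves $M=\ker h$ rather than $K$: $g(M)$ is a finitely generated normal subgroup of $F_2$, hence trivial or of finite index. It cannot have finite index, since the common character $\chi=g^*\chi_1=h^*\chi_2$ vanishes on $M$ while $\chi_1\neq 0$ cannot vanish on a finite-index subgroup of $F_2$ (equivalently, $g^*H^1(F_2;\R)\cap h^*H^1(C;\R)=\{0\}$ would make the two sphere pullbacks disjoint). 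Hence $g(M)=1$ and $g$ factors through $h$, so $C$ surjects onto $F_2$. A hyperbolic orbisurface group surjecting onto $F_2$ must have genus at least $2$ (the cone generators map to torsion, hence trivially, so a genus-$\le 1$ orbisurface group has abelian image in $F_2$), giving $b_1(C)\ge 4$; this contradicts your own bound $b_1(C)\le 2$ (or, as in the paper, forces $b_1(G)\ge b_1(\G)+b_1(C)\ge 8>6$). With this observation inserted, your proof closes and the elaborate descent machinery is unnecessary.
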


\begin{proof} We begin with the somewhat lengthy, but otherwise straightforward construction of $X$.

Let $F$ be a surface of genus $2$ and let \[ K := \langle \a_i,\b_i|\prod_{i=1}^2 [\a_i,\b_i]= 1 \rangle \] be a standard presentation of its fundamental group. (Above and in the rest, we will always assume implicitly that $i =  1,2$.) Define an automorphism $\varphi \in \mbox{Aut}(K)$ as follows: let \[ \varphi(\a_i) = \a_i \b_i; \,\ \varphi(\b_i) = \b_i; \] a straightforward calculation shows that $\varphi(\prod_{i=1}^2 [\a_i,\b_i]) = \prod_{i=1}^2 [\a_i,\b_i]$, hence $\varphi$ is well--defined; it is immediate to verify that \[ \varphi^{-1}(\a_i) =  \a_i \b^{-1}_i; \,\ \varphi^{-1}(\b_i) = \b_i \] is similarly well--defined, and a two--sided inverse to $\varphi$. We can lift $\varphi \in \mbox{Aut}(K)$ to a self--diffeomorphism of $F$ (which can be easily proven to be orientation-preserving), that we denote as well by $\varphi\colon F \to F$, which induces the above automorphism on the fundamental group $K$. 
Out of that, we can define a fibered $3$--manifold $N$ as the mapping torus of $\varphi\colon F \to F$. Its fundamental group is given by 
\[ \pi_1(N) = K \rtimes_{\varphi} \Z = \langle \a_i,\b_i,t|\a_i^t = \a_i \b_i, \b_i^t = \b_{i},\prod_{i=1}^2 [\a_i,\b_i] = 1 \rangle. \] 
The $4$--manifold $S^1 \times N$ has the structure of $F$--bundle over $T^2$. We can pick a trivial framing of $F$ in $S^1 \times N$ and in the product bundle $F \times T^2 \to T^2$ to define the fiber sum 
\[ X := S^1 \times N \#_F F \times T^2 = S^1 \times N \setminus F \times D^2  \cup_{F \times \partial D^2} F \times (T^2 \setminus D^2), \] 
choosing any identification of the fibers and an orientation--reversing diffeomorphism of $\partial D^2$  to get a surface bundle $X$  of fiber $F$ over a genus--$2$ surface $B$. It is not difficult to compute the resulting fundamental group using Seifert--Van Kampen Theorem: $S^1 \times N \setminus F \times D^2$ deformation--retracts to a topological bundle over $S^1 \vee S^1$, with monodromy 
$\varphi$ along one factor and trivial monodromy on the other, so that 
\[ \pi_1(S^1 \times N \setminus F \times D^2) = \langle \a_i,\b_i,s,t|\a_i^t = \a_i \b_i, \b_i^t = \b_{i},\a_i^s = \a_i, \b_i^s = \b_{i},\prod_{i=1}^2 [\a_i,\b_i] = 1 \rangle \]
while \[ \pi_1(F \times (T^2 \setminus D^2)) = \langle \a_i,\b_i,x,y|a_i^x=a_i,b_i^x=b_i,a_i^y=a_i,b_i^y=b_i,\prod_{i=1}^2 [\a_i,\b_i] = 1 \rangle. \] Amalgamation identifies the generators of $\pi_1(F)$ according to their symbols, and adds the relation $[s,t][x,y] = 1$, so that $G = \pi_1(X)$ is given by \[ G = \langle \a_i,\b_i,s,t,x,y|\a_i^t = \a_i \b_i, \b_i^t = \b_{i},\a_i^s = \a_i, \b_i^s = \b_{i},\prod_{i=1}^2 [\a_i,\b_i] = 1, [s,t][x,y] = 1 \rangle. \]

We can proceed now to show that $X$ satisfies the properties of the statement. The key strategy, that dates back in this context at least to the work of Johnson (\cite{Jo93}, see also \cite{Cat03,Sa15})  is based on the fact that finitely generated normal subgroups of free groups or hyperbolic orbisurface groups are either trivial or finite index.

Following the usual notation, we have the quotient map $G \stackrel{f}{\longrightarrow} \G$ according to the sequence in (\ref{eq:ses}). Besides that, we can identify a second infinite quotient of $G$: define a map $g\colon G \to F_2$, the free nonabelian group on two generators, by sending $\a_1$ and $\a_2$ to the two generators while all other generators are sent to the trivial element. A straightforward calculation shows that all the relations in the presentation of $G$ are sent to the trivial element, namely $g\colon G \to F_2$ is well--defined. (A long exercise -- that we don't recommend to the reader -- in Reidemeister--Schreier rewriting process shows that $\Lambda := \mbox{ker} \hspace{1pt} g$ surjects to $\Z^{\infty}$, in particular is not finitely generated.) The setting relating the two quotients of $G$ is described in the following diagram. 
\[ \xymatrix@=9pt{ 
& & & 1  \ar[dr]  & &  &  &  \\
 & & & & K \ar[dr]  \ar[drrr]^{g} & & & & \\ 
& 1\ar[rr] &  & \Lambda  \ar[rr] & & G \ar[rr]_{g} \ar[dr]_{ f} &  & F_2 \ar[rr] & & 1\\ 
 &  &  & & & & \G \ar[dr] & & \\
& & &   & &  &   & 1
  } \]
We can see explicitly that $g(K) = F_2$, from which we deduce that $K\Lambda = G$. 
This entails that $f^*H^1(\G;\R) \cap g^*H^1(F_2;\R) = \{0\}$. Now $\mbox{dim\hspace*{1pt}}H^1(G;\R) = 6$, hence there is a direct sum decomposition \begin{equation} \label{eq:dirsum}  H^1(G;\R) = f^*H^1(\G;\R) \oplus g^*H^1(F_2;\R) .\end{equation} 
We obtain therefore from \cite[Corollary~B1.8]{St13} that
 \begin{equation} \label{eq:strebel} f^{*}(\S^{1}(\G)^{c}) \cup g^{*}(\S^{1}(F_2)^{c})\,\subset \,\Sigma^{1}(G)^{c}. \end{equation}
Note that \eqref{eq:dirsum} and \eqref{eq:strebel} imply  that the complement of the BNS invariant of $G$, i.e. the set of exceptional characters, contains at least two spheres, one of codimension $2$ and one of codimension $4$, the latter determined by $g^*H^1(F_2;\R)$.
We claim that no exceptional characters in $g^{*}(\S^{1}(F_2)^{c})$ factorize through a second surface bundle structure nor a pencil--type sequence as in Eq. (\ref{eq:penseq}). 

To prove this claim, assume by contradiction that
 $g^{*}(\S^{1}(F_2)^{c}) \cap h^{*}(\S^{1}(C)^{c}) \neq \emptyset $ where $h\colon G\to C$ is an epimorphism onto a  hyperbolic orbisurface group such that $M:=\mbox{ker} \hspace{1pt} h $ is finitely generated. We will start by showing that this entails that $g\colon G \to F_2$ factorizes through $h\colon G \to C$. Consider the diagram 
\[ \xymatrix@=9pt{ 
& & & 1  \ar[dr]  & &  &  &  \\
 & & & & M \ar[dr]  \ar[drrr]^{g} & & & & \\ 
& 1\ar[rr] &  & \Lambda  \ar[rr] & & G \ar[rr]_{g} \ar[dr]_{ h} &  & F_2 \ar[rr] & & 1\\ 
 &  &  & & & & C \ar[dr] & & \\
& & &   & &  &   & 1
  } \]
Now $g(M) \leq F_2$ is a finitely generated normal subgroup of $F_2$, hence it must be trivial or finite index. If it were finite index, then $M \Lambda \leq G$ would be finite index and $g^*H^1(F_2;\R)  \cap h^*H^1(C;\R)= \{0\}$, which would imply $g^{*}(\S^{1}(F_2)^{c}) \cap h^{*}(\S^{1}(C)^{c}) = \emptyset$. It follows that we must have $g(M) \leq F_2$ trivial. This entails $M \leq \Lambda$, hence $G/M = C$ admits an epimorphism onto $G/\Lambda = F_2$, or phrased otherwise  $g\colon G \to F_2$ factorizes though $h\colon G \to C$. Next, we will show that this factorization is not compatible with having $b_1(G) = 6$. In fact this would yield the diagram
\[ \xymatrix@=9pt{ 
& & & 1  \ar[dr]  & &  &   & 1 \\
 & & & & K \ar[dr]  \ar[rr]^{h} & & C \ar[dr] \ar[ur] & & \\ 
&  &  &  & & G \ar[rr]_{g} \ar[ur]^(0.4){ h} \ar[dr]_{ f} &  & F_2 \ar[rr] & & 1\\ 
 &  &  & & M \ar[ur] & & \G \ar[dr] & & \\
& &  &  1 \ar[ur] & &  &   & 1
  } \] Now $h(K) \leq C$ is a finitely generated normal subgroup of $C$, so again it can be either trivial or finite index. It cannot be trivial, because $g(K) \leq F_2$ is already nontrivial, so $h(K) \leq C$ is finite index. Once again, $KM \leq G$ is finite index. But then we would have $f^*H^1(\G;\R) \cap h^*H^1(C;\R) = \{0\}$. Now an explicit check shows that an orbisurface group $C$ with an $F_2$ quotient must have $b_1(C) \geq 4$, hence  we would have $b_1(G) \geq b_1(\G) + b_1(C) \geq 8$, which violates the condition $b_1(G) = 6$.   \end{proof}

\begin{remark} Note that the information on the BNS invariant of $G = \pi_1(X)$ contained in the above Proposition informs us already that $G$ cannot be a K\"ahler group, as it violates the conclusions of \cite{De10}. Alternatively, we could use Catanese's version of Castelnuovo--de Franchis Theorem (see \cite{Cat91}) to argue that if $G$ were K\"ahler, the map $g$ would have to factor through a map to a orbisurface group of genus at least $2$. But this would require, again, that $b_{1}(G) \geq 8$. \end{remark}



\begin{thebibliography}{10}
\bibitem[BNS87]{BNS87} R. Bieri, W. Neumann and  R. Strebel, {\em A geometric invariant of discrete groups}, Invent. Math. 90 (1987), no. 3, 451--477.
\bibitem[BR88]{BR88} R. Bieri and B. Renz, {\em Valuations on free resolutions and higher geometric invariants of groups}, Comment. Math. Helv. 63 (1988), no. 3, 464--497.
\bibitem[Br18]{Br18} C. Bregman, {\em On Kodaira fibrations with invariant cohomology}, Geom. Topol. (to appear).
\bibitem[Cat91]{Cat91} F. Catanese, {\em Moduli and classification of irregular K\"ahler manifolds (and algebraic varieties) with Albanese general type fibrations}, Invent. Math. 104 (1991), 263--289.
\bibitem[Cat03]{Cat03} F. Catanese, {\em Fibred K\"ahler and quasi--projective groups},  Special issue dedicated to Adriano Barlotti, Adv. Geom. suppl. (2003), 13--27. 
\bibitem[De10]{De10} T. Delzant, {\em L'invariant de Bieri Neumann Strebel des groupes fondamentaux des vari\'et\'es k\"ahleriennes}, Math. Annalen 348 (2010), 119--125.
\bibitem[FV19]{FV19} S. Friedl and S. Vidussi, {\em On virtual algebraic fibrations of K\"ahler groups}, Nagoya Math. J. (to appear).
\bibitem[Hil02]{Hil02} J. Hillman, {\em Four--manifolds, geometries and knots}, Geom. Topol. Monogr. 5 (2002, revision 2014).
\bibitem[Hil15]{Hil15} J. Hillman, {\em Sections of surface bundles}, Interactions between low-dimensional topology and mapping class groups, Geom. Topol. Monogr. 19 (2015).
\bibitem[Jo97]{Jo97} D. L. Johnson, {\em Presentations of groups}. Second edition. London Mathematical Society Student Texts, 15 (1997).
\bibitem[Jo93]{Jo93} F. E. A. Johnson, {\em Surface fibrations and automorphisms of nonabelian extensions}, Quart. J. Math. Oxford Ser. (2) 44 (1993), no. 174, 199--214. 
\bibitem[KoWo15]{KoWo15} N. Koban and P. Wong, {\em The geometric invariants of certain group extensions with applications to twisted conjugacy} Topology Appl. 193 (2015), 192--205. 
\bibitem[KrWa19]{KrWa19} R. Kropholler and G. Walsh, {\em Incoherence and fibering of many free--by--free groups}, Ann. Inst. Fourier (Grenoble) (to appear).
\bibitem[Mo87]{Mo87} S. Morita, {\em Characteristic classes of surface bundles}, Invent. Math. 90 (1987), no. 3, 551--577.
\bibitem[Sa15]{Sa15} N. Salter, {\em Cup products, the Johnson homomorphism, and surface bundles over surfaces with multiple fiberings}, Alg. Geom. Topol. 15-6 (2015), 3613--3652. 
\bibitem[St13]{St13} R. Strebel, {\em Notes on the Sigma invariants}, preprint (2013), arXiv:1204.0214.



\end{thebibliography}
\end{document}